\newtheorem{theorem}{Theorem}[section]
\newtheorem{lemma}[theorem]{Lemma}
\theoremstyle{definition}
\newtheorem{definition}[theorem]{Definition}
\theoremstyle{remark}
\numberwithin{equation}{section}
\definecolor{mygray}{rgb}{0.3333,0.3333,0.3333} 
\def\myint#1#2#3{\int_{#1} #2\,\mathrm{d}#3} 
\newcommand{\R}{\ensuremath{\mathbb{R}}} 
\newcommand{\N}{\ensuremath{\mathbb{N}}}
\newcommand{\supp}{\mathrm{supp}\,}
\begin{document}

 \title[Local uniqueness for an inverse BVP with partial data]{Local uniqueness for an inverse boundary value problem with partial data}

 \author{Bastian Harrach}
\address{University of Stuttgart\\
Department of Mathematics - IMNG\\
Chair of Optimization and Inverse Problems\\
Allmandring 5b\\
70569 Stuttgart\\
Germany}
\email{harrach@math.uni-stuttgart.de}
\thanks{The authors would like to thank the German Research Foundation (DFG) for financial support of the project within the
Cluster of Excellence in Simulation Technology (EXC 310/1) at the University of Stuttgart.}
\thanks{This is a prepublication version of a journal article published in \emph{Proc.\ Amer.\ Math.\ Soc.}\ \textbf{145}(3), 1087--1095, 2017, 
\url{https://doi.org/10.1090/proc/12991}.
}

\author{Marcel Ullrich}
\address{University of Stuttgart\\
Department of Mathematics - IMNG\\
Chair of Optimization and Inverse Problems\\
Allmandring 5b\\
70569 Stuttgart\\
Germany}
\email{marcel.ullrich@mathematik.uni-stuttgart.de}

\subjclass[2010]{Primary 35J10; Secondary 35R30}

\date{\today}


\begin{abstract}
In dimension $n\geq 3$, we prove a local uniqueness result for the potentials $q$ of the Schr\"odinger equation $-\Delta u+qu=0$ from partial boundary data.
More precisely, we show that potentials $q_1,q_2\in L^\infty$ with positive essential infima can be distinguished by local boundary data
if there is a neighborhood of a boundary part where $q_1\geq q_2$ and $q_1\not\equiv q_2$.
\end{abstract}

\maketitle

\section{Introduction}\label{sec:intro}

Let $\Omega\subseteq\mathbb{\R}^n$, $n\geq 3$, be a Lipschitz-domain with outer normal $\nu$ and $L^\infty_+(\Omega)$ denote the subspace
of $L^\infty(\Omega)$-functions with positive essential infima.
We consider the question whether the potential $q\in L^\infty_+(\Omega)$ in the Schr\"odinger equation
\begin{equation}\label{eq:elliptic_schroedinger_equation_1}
 -\Delta u +qu=0\quad\text{in}\quad\Omega
\end{equation}
is uniquely determined by partial boundary data
on a possibly arbitrarily small non-empty relatively open subset $\Gamma\subseteq\partial\Omega$.

For such a boundary subset $\Gamma\subseteq\partial\Omega$ and $q\in L^\infty_+(\Omega)$, the partial boundary data, we consider in this work, is given by the \emph{local Neumann-to-Dirichlet (NtD) operator}
\begin{equation}
 \Lambda_\Gamma(q)\,:\,L^2(\Gamma)\to L^2(\Gamma),\quad g\mapsto u^{(g)}_q\vert_\Gamma,
\end{equation}
where $u^{(g)}_q\in H^1(\Omega)$ is the solution of
\begin{equation}\label{eq:N_BVP_of_schroedinger_equation}
    -\Delta u^{(g)}_q +qu^{(g)}_q=0\ \text{in}\ \Omega
   \quad\quad\text{with}\quad\quad\partial_\nu u^{(g)}_q\vert_{\partial\Omega}=
  \begin{cases}
   g,&\text{on}\ \Gamma,\\
   0,&\text{on}\ \partial\Omega\setminus \Gamma.
  \end{cases}
\end{equation}
$\Lambda_\Gamma(q)$ is easily shown to be a compact self-adjoint linear operator.

In this article, we will show the following local uniqueness result.

\begin{theorem}\label{th:main_theorem_of_distinguishability} 
Let $q_1,q_2\in L^\infty_+(\Omega)$ and $V\subseteq\mathbb{R}^n$ be an open connected set with
$q_1\geq q_2$ on $\Omega\cap V$ and $\Gamma:=\partial\Omega\cap V\neq\emptyset$. Then,
\begin{equation}
 q_1\vert_{\Omega\cap V}\not\equiv q_2\vert_{\Omega\cap V}\quad\text{implies}\quad\Lambda_\Gamma(q_1)\neq\Lambda_\Gamma(q_2).
\end{equation}
Moreover, in that case $\Lambda_\Gamma(q_2)-\Lambda_\Gamma(q_1)$ has a positive eigenvalue.
\end{theorem}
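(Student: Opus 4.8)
The plan is to prove this via a \emph{monotonicity method} combined with \emph{localized potentials}, which is a now-standard strategy for such inverse problems. The core of the argument rests on two ingredients. First, I expect a monotonicity relation connecting the ordering of potentials to the ordering (in the sense of quadratic forms) of the NtD operators. Specifically, for the Neumann-to-Dirichlet map one typically obtains an estimate of the form
\begin{equation}\label{eq:monotonicity_sketch}
 \Langle g, (\Lambda_\Gamma(q_2)-\Lambda_\Gamma(q_1)) g\Rangle \geq \myint{\Omega}{(q_1-q_2)\,|u^{(g)}_{q_1}|^2}{x},
\end{equation}
or a variant thereof, derived from the variational characterization of the solutions $u^{(g)}_q$ and the self-adjointness of $\Lambda_\Gamma(q)$. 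This kind of inequality follows from comparing the energy functionals associated with $q_1$ and $q_2$, using that the difference of bilinear forms controls the difference of the boundary operators.

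Given the monotonicity estimate, the strategy is as follows. First I would observe that since $q_1 \geq q_2$ on $\Omega\cap V$ (and by the hypothesis $q_1\not\equiv q_2$ there), the set where $q_1 - q_2$ is bounded below by some $c>0$ has positive measure inside $\Omega\cap V$. The key is then to construct \emph{localized potentials}: a sequence of Neumann data $g_k\in L^2(\Gamma)$ whose corresponding solutions $u^{(g_k)}_{q_1}$ have energy $\int |u^{(g_k)}_{q_1}|^2$ concentrated on this subset of $\Omega\cap V$, so that the right-hand side of~\eqref{eq:monotonicity_sketch} can be made arbitrarily large while the solutions carry negligible energy elsewhere. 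If I can show $\Langle g_k, (\Lambda_\Gamma(q_2)-\Lambda_\Gamma(q_1)) g_k\Rangle \to +\infty$ (or merely that it is positive for some $g$), then the operator difference cannot be the zero operator, giving $\Lambda_\Gamma(q_1)\neq\Lambda_\Gamma(q_2)$.

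For the final claim about a positive eigenvalue, I would combine the fact that $\Lambda_\Gamma(q_2)-\Lambda_\Gamma(q_1)$ is a compact self-adjoint operator (as the difference of such) with the fact that its quadratic form takes a strictly positive value on some $g$. A compact self-adjoint operator with an indefinite or positive quadratic form on at least one vector must have a positive eigenvalue, since its spectrum consists of eigenvalues accumulating only at $0$, and the largest eigenvalue equals the supremum of the Rayleigh quotient; if this supremum is positive, it is attained as a genuine positive eigenvalue.

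The hard part will be the construction of the localized potentials, i.e.\ producing Neumann data so that solutions concentrate their energy in a prescribed region $\Omega\cap V$ touching the boundary portion $\Gamma$ while remaining controlled elsewhere. This is where the connectedness of $V$, the relative openness of $\Gamma$, and the dimension $n\geq 3$ genuinely enter, and where one typically invokes a unique continuation principle together with a functional-analytic duality (or Runge approximation) argument to rule out that the relevant energy terms vanish identically. I expect the delicate points to be handling the partial (local) nature of the data — the Neumann condition is supported only on $\Gamma$ — and ensuring the localization can be achieved near the boundary rather than only in the interior.
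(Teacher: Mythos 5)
Your proposal follows essentially the same route as the paper: the monotonicity inequality $\left(g,\left(\Lambda_\Gamma(q_2)-\Lambda_\Gamma(q_1)\right)g\right)_{L^2(\Gamma)}\geq\myint{\Omega}{(q_1-q_2)\,|u^{(g)}_{q_1}|^2}{x}$ combined with localized potentials (constructed, exactly as you anticipate, via unique continuation and a functional-analytic range/duality criterion) to make the integral over $\Omega\cap V$ blow up while the contribution from $\Omega\setminus V$ stays negligible, and then the compact self-adjoint spectral argument for the positive eigenvalue. This matches the paper's proof structure (its Lemmas on monotonicity and localized potentials) in all essential respects.
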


The \emph{inverse potential problem} of the Schr\"odinger equation is closely related to the \emph{inverse conductivity problem (Calder\'on Problem \cite{calderon1980inverse,calderon2006inverse})}.
For both problems, uniqueness from full boundary data on $\partial\Omega$ has been extensively studied in the last 30 years. 
To give a brief overview of prominent contributions, we list Kohn and Vogelius \cite{kohn1984determining,kohn1985determining}, Sylvester and Uhlmann \cite{sylvester1987global}, Nachman \cite{nachman1996global},
Astala and P\"aiv\"arinta \cite{astala2006calderon}, Bukhgeim \cite{bukhgeim2008recovering}, Haberman and Tataru \cite{haberman2013uniqueness}.

The uniqueness problem from partial boundary data has attracted growing attention over the last years.
Typically, this is studied for data of type $C^D_q$ or $C^N_q$ on sets $\Gamma^D,\Gamma^N\subseteq\partial\Omega$, where
\begin{equation*}
 C^D_q :=\left\lbrace \left(u\vert_{\Gamma^D},\partial_\nu u\vert_{\Gamma^N}\right)\,:\,-\Delta u + q u=0,\ \supp(u\vert_{\partial\Omega})\subseteq \Gamma^D\right\rbrace,
\end{equation*}
\begin{equation*}
 C^N_q :=\left\lbrace \left(u\vert_{\Gamma^D},\partial_\nu u\vert_{\Gamma^N}\right)\,:\,-\Delta u + q u=0,\ \supp(\partial_\nu u\vert_{\partial\Omega})\subseteq \Gamma^N\right\rbrace.
\end{equation*}
Obviously, for potentials $q\in L^\infty_+(\Omega)$, the question of uniqueness from data of type $C^N_q$ with $\Gamma=\Gamma^D=\Gamma^N$ is equivalent to the question of uniqueness from the
local NtD operator $\Lambda_\Gamma(q)$.

Hereafter, we list some recent results. Let us also refer to the article of Kenig and Salo \cite{kenig2014recent} for a further overview.  

For dimension $n=2$, Imanuvilov, Uhlmann and Yamamoto showed uniqueness from data of type $C^D_q$ in \cite{imanuvilov2010calderon}, where $\Gamma^D=\Gamma^N$ is an arbitrary open subset in $\partial\Omega$ and the
potentials are in $\mathcal{C}^{2+\alpha}\left(\overline{\Omega}\right)$ for $\alpha>0$.

For dimension $n\geq 3$, Kenig, Sj\"ostrand and Uhlmann proved uniqueness from data of type $C^D_q$ for $q\in L^\infty(\Omega)$ in \cite{kenig2007calderon}, where $\Gamma^D$ and $\Gamma^N$ are open
neighborhoods slightly larger than a front face and a back face of $\partial\Omega$, respectively.
Nachman and Street presented a constructive proof of this result in \cite{nachman2010reconstruction}.
In \cite{isakov2007uniqueness}, Isakov proved uniqueness from data of type $C^D_q$ for $q\in L^\infty(\Omega)$ assuming $\Gamma^D=\Gamma^N$ and that the remaining boundary part is contained in a plane or a sphere.
In \cite{kenig2012calderon}, Kenig and Salo presented a result that unifies
and improves the approaches of \cite{kenig2007calderon} and \cite{isakov2007uniqueness}. In particular, they reduced the assumptions regarding the sets $\Gamma^D$, $\Gamma^N$ and, if so (in \cite{isakov2007uniqueness}),
the remaining boundary part. Let us refer to their article for a detailed description.

Theorem \ref{th:main_theorem_of_distinguishability} is, to the knowledge of the authors, the first result that presents a uniqueness result for partial data
on an arbitrary non-empty relatively open boundary part $\Gamma\subseteq\partial\Omega$ (with $\Gamma=\Gamma^D=\Gamma^N$) for dimension $n\geq 3$.
Except the assumption that $\Omega$ has to be a Lipschitz-domain, there are no further assumptions to the boundary required: neither to the boundary part $\Gamma$ nor to the remaining boundary part.

This paper is organized as follows. In Section \ref{sec:proof_of_the_main_theorem}, we prove Theorem \ref{th:main_theorem_of_distinguishability}.
For this purpose, we present and combine a \emph{monotonicity relation} for the local NtD operator (Lemma \ref{lemma:monotonicity}) and
a new variant of the concept of \emph{localized potentials} (Lemma \ref{lemma:localized_potentials}, cf.\ \cite{gebauer2008localized} for the initial concept).
%
Lemma \ref{lemma:monotonicity} presents a monotonicity inequality that yields a lower bound for the change of the local NtD operator (represented by its corresponding quadratic form) caused by a potential
change.
This lower bound depends on the spatial change of the potential weighted by the solution of the Schr\"odinger equation for the initial potential.  
%
Lemma \ref{lemma:localized_potentials} shows a possibility to control the lower bound of the monotonicity inequality.\footnote{Originally,
the concept of localized potentials was used to locally control electrical potentials for the inverse conductivity problem. Since in this work it is used to locally weight
the potentials of the Schr\"odinger equation, it seems appropriate to keep with the name ``localized potentials''.}
The approach of combining a monotonicity relation with the concept of localized potentials has previously been used
in \cite{harrach2013monotonicity,harrach2009uniqueness,harrach2012simultaneous}.
The proofs of the Lemmas \ref{lemma:monotonicity} and \ref{lemma:localized_potentials} are postponed to
the Sections \ref{sec:monotonicity_for_the_NtDs} and \ref{sec:localized_potentials}, respectively.

\section{The proof of Theorem \ref{th:main_theorem_of_distinguishability}}\label{sec:proof_of_the_main_theorem}

Let $q_1,q_2\in L^\infty_+(\Omega)$, let $V\subseteq\mathbb{R}^n$ be an open connected set and let $\Gamma:=\partial\Omega\cap V\neq\emptyset$.

To prove Theorem \ref{th:main_theorem_of_distinguishability}, we combine a monotonicity inequality (Lemma \ref{lemma:monotonicity}) for Neumann-to-Dirichlet operators
with a result about the existence of localized potentials (Lemma \ref{lemma:localized_potentials}).

\begin{lemma}\label{lemma:monotonicity}
 Let $g\in L^2(\Gamma)$ and $u_1:=u^{(g)}_{q_1}\in H^1(\Omega)$ be the corresponding solution of \eqref{eq:N_BVP_of_schroedinger_equation}. Then,
\begin{equation}
 \left(g,\left(\Lambda_\Gamma(q_2)-\Lambda_\Gamma(q_1)\right)g\right)_{L^2(\Gamma)}\geq-\myint{\Omega}{(q_2-q_1){u_1}^2}{x}.
\end{equation}
 \end{lemma}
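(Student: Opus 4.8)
The plan is to prove the monotonicity inequality via the variational (weak) formulation of the Neumann boundary value problem. First I would recall that the quadratic form of the NtD operator admits a variational characterization: for $g \in L^2(\Gamma)$, the solution $u_j := u^{(g)}_{q_j}$ satisfies the weak form
\begin{equation*}
 \myint{\Omega}{\left(\nabla u_j \cdot \nabla v + q_j u_j v\right)}{x} = \myint{\Gamma}{g v}{s}
 \quad \text{for all } v \in H^1(\Omega),
\end{equation*}
and consequently
\begin{equation*}
 \left(g, \Lambda_\Gamma(q_j) g\right)_{L^2(\Gamma)} = \myint{\Gamma}{g\, u_j\vert_\Gamma}{s} = \myint{\Omega}{\left(|\nabla u_j|^2 + q_j u_j^2\right)}{x}.
\end{equation*}
This identity is what turns the operator difference into a comparison of Dirichlet-type energies, so establishing it cleanly is the natural first step.

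Next I would exploit a minimization/convexity principle. The key observation is that, for fixed $g$, the energy functional $v \mapsto \myint{\Omega}{(|\nabla v|^2 + q_2 v^2)}{x} - 2\myint{\Gamma}{g v}{s}$ is minimized precisely by $u_2 = u^{(g)}_{q_2}$, with minimal value equal to $-\left(g, \Lambda_\Gamma(q_2)g\right)_{L^2(\Gamma)}$. Plugging in the competitor $v = u_1$ instead of the true minimizer $u_2$ therefore yields
\begin{equation*}
 -\left(g, \Lambda_\Gamma(q_2)g\right)_{L^2(\Gamma)} \leq \myint{\Omega}{\left(|\nabla u_1|^2 + q_2 u_1^2\right)}{x} - 2\myint{\Gamma}{g\, u_1\vert_\Gamma}{s}.
\end{equation*}

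The final step is purely algebraic. Using the energy identity for $u_1$ to rewrite $\myint{\Gamma}{g\, u_1\vert_\Gamma}{s} = \myint{\Omega}{(|\nabla u_1|^2 + q_1 u_1^2)}{x} = \left(g, \Lambda_\Gamma(q_1)g\right)_{L^2(\Gamma)}$, the right-hand side above collapses: the gradient terms cancel, and what remains is exactly $-\left(g, \Lambda_\Gamma(q_1)g\right)_{L^2(\Gamma)} + \myint{\Omega}{(q_2 - q_1)u_1^2}{x}$. Rearranging gives the claimed inequality
\begin{equation*}
 \left(g, \left(\Lambda_\Gamma(q_2) - \Lambda_\Gamma(q_1)\right)g\right)_{L^2(\Gamma)} \geq -\myint{\Omega}{(q_2 - q_1)u_1^2}{x}.
\end{equation*}
I expect the main (though still routine) obstacle to be justifying the variational framework rigorously on a Lipschitz domain with only $L^\infty_+$ potentials: one must verify that the bilinear form $a_j(u,v) = \myint{\Omega}{(\nabla u \cdot \nabla v + q_j u v)}{x}$ is coercive on $H^1(\Omega)$ (which follows from $\operatorname{essinf} q_j > 0$ together with a Poincaré-type estimate), so that $u^{(g)}_{q_j}$ exists, is unique, and genuinely minimizes the corresponding energy. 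Once this well-posedness and the energy identity are in hand, the inequality is immediate from testing the quadratic functional with the ``wrong'' solution $u_1$; the positivity assumptions on $q_1, q_2$ serve only to guarantee the minimization principle, and no sign condition relating $q_1$ and $q_2$ is needed at this stage.
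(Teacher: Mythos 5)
Your proof is correct and is essentially the paper's argument in different packaging: the paper expands $\left(g,\left(\Lambda_\Gamma(q_2)-\Lambda_\Gamma(q_1)\right)g\right)_{L^2(\Gamma)} = b_2(u_2,u_2)-2b_2(u_2,u_1)+b_1(u_1,u_1)$ and then drops the nonnegative remainder $\int_\Omega \left(|\nabla(u_2-u_1)|^2+q_2(u_2-u_1)^2\right)dx$, which is precisely the content of the Dirichlet minimization principle you invoke with the competitor $u_1$. Both arguments rest on the same two weak formulations and on $q_2\geq 0$, and your closing remark that no ordering of $q_1,q_2$ is needed for this lemma matches the paper exactly.
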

 
 Lemma \ref{lemma:monotonicity} is proven in Section \ref{sec:monotonicity_for_the_NtDs}.

\begin{lemma}\label{lemma:localized_potentials}
Let $q_1\gneq q_2$ on $\Omega\cap V$ (i.e., $q_1\vert_{\Omega\cap V}\geq q_2\vert_{\Omega\cap V}$ and $q_1\vert_{\Omega\cap V}\not\equiv q_2\vert_{\Omega\cap V}$).
Then, there exists a sequence $(g_m)_{m\in\N}\subset L^2(\Gamma)$ such that the
corresponding solutions $\left(u_m\right)_{m\in\N}:=\left(u^{(g_m)}_{q_1}\right)_{m\in\N}\subset H^1(\Omega)$ of \eqref{eq:N_BVP_of_schroedinger_equation}
fulfill
 \begin{equation}
  \lim_{m\to\infty}\myint{V\cap\Omega}{(q_1-q_2){u_m}^2}{x}=\infty\quad\text{and}\quad\lim_{m\to\infty}\myint{\Omega\setminus V}{(q_1-q_2){u_m}^2}{x}=0.
 \end{equation}
\end{lemma}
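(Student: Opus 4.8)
The plan is to recast the two desired limits as the statement that the $L^2(\Gamma)$-seminorm $g\mapsto\big(\int_{\Omega\cap V}(q_1-q_2)(u^{(g)}_{q_1})^2\dx\big)^{1/2}$ \emph{cannot} be dominated by the seminorm $g\mapsto\big(\int_{\Omega\setminus V}(u^{(g)}_{q_1})^2\dx\big)^{1/2}$, and then to establish this non-domination by a range (duality) argument combined with unique continuation. Concretely, introduce the bounded linear operators
\begin{align*}
 K_1:\ L^2(\Gamma)\to L^2(\Omega\cap V),\quad & g\mapsto\sqrt{q_1-q_2}\,u^{(g)}_{q_1}\big|_{\Omega\cap V},\\
 K_2:\ L^2(\Gamma)\to L^2(\Omega\setminus V),\quad & g\mapsto u^{(g)}_{q_1}\big|_{\Omega\setminus V},
\end{align*}
so that $\norm{K_1g}^2=\int_{\Omega\cap V}(q_1-q_2)(u^{(g)}_{q_1})^2\dx$ and $\norm{K_2g}^2=\int_{\Omega\setminus V}(u^{(g)}_{q_1})^2\dx$. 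Since $q_1-q_2\in L^\infty$, the second integral in the assertion is bounded by $\|q_1-q_2\|_{L^\infty}\norm{K_2g}^2$, so it suffices to produce $(g_m)$ with $\norm{K_1g_m}\to\infty$ and $\norm{K_2g_m}\to0$. By the standard functional-analytic characterisation of localized potentials (cf.\ \cite{gebauer2008localized}), such a sequence exists if and only if $\range(K_1^*)\not\subseteq\range(K_2^*)$; this rests on Douglas' lemma, which equates the range inclusion with the existence of a constant $C$ such that $\norm{K_1g}\le C\norm{K_2g}$ for all $g$. Thus the whole lemma reduces to proving the range non-inclusion.

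Next I would identify the adjoints through dual source problems. For $\phi\in L^2(\Omega\cap V)$ let $v_\phi\in H^1(\Omega)$ solve $-\Delta v_\phi+q_1v_\phi=\sqrt{q_1-q_2}\,\phi$ in $\Omega$ with homogeneous Neumann data on all of $\partial\Omega$; this is well posed because $q_1$ has positive essential infimum. Testing the weak formulation of this problem against $u^{(g)}_{q_1}$ and that of \eqref{eq:N_BVP_of_schroedinger_equation} against $v_\phi$ yields $(K_1g,\phi)=(g,v_\phi|_\Gamma)_{L^2(\Gamma)}$, so $K_1^*\phi=v_\phi|_\Gamma$; analogously $K_2^*\psi=v_\psi|_\Gamma$, where $v_\psi$ solves the same equation with right-hand side $\psi\chi_{\Omega\setminus V}$. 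Hence $\range(K_1^*)$ and $\range(K_2^*)$ consist of the $\Gamma$-traces of homogeneous-Neumann Schr\"odinger solutions whose source sits inside $\Omega\cap V$, respectively inside $\Omega\setminus V$.

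The core is the range non-inclusion, which I would prove by contradiction. Suppose $\range(K_1^*)\subseteq\range(K_2^*)$. Fix $c>0$ so that $E:=\{q_1-q_2\ge c\}\cap(\Omega\cap V)$ has positive measure, pick a Lebesgue density point $x_0$ of $E$ and a ball $B:=B_r(x_0)$ with $\overline B\subseteq\Omega\cap V$ and $|E\cap B|>0$. Put $s_0:=\chi_{E\cap B}\ge0$ and $\phi_0:=s_0/\sqrt{q_1-q_2}\in L^2(\Omega\cap V)$, both supported in $\overline B$, so that $\sqrt{q_1-q_2}\,\phi_0=s_0$. By the inclusion there is $\psi\in L^2(\Omega\setminus V)$ with $v_{\phi_0}|_\Gamma=v_\psi|_\Gamma$, so $w:=v_{\phi_0}-v_\psi$ has vanishing Cauchy data (Dirichlet and Neumann trace) on $\Gamma$; moreover, since the sources of $v_{\phi_0}$ and $v_\psi$ lie in $\overline B$ and in $\Omega\setminus V$, $w$ solves $-\Delta w+q_1w=0$ on $(\Omega\cap V)\setminus\overline B$. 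Unique continuation from $\Gamma$ then forces $w=0$ on the component of $(\Omega\cap V)\setminus\overline B$ adjacent to $\Gamma$, hence $w\equiv0$ in a neighbourhood of $\partial B$. Consequently $w|_B\in H^1(B)$ has vanishing Cauchy data on $\partial B$ and solves $-\Delta w+q_1w=s_0$ in $B$ with $s_0\ge0$, $s_0\not\equiv0$. Elliptic regularity together with the strong maximum principle and Hopf's lemma give $w>0$ in $B$ and $\partial_\nu w\ne0$ on the smooth sphere $\partial B$, contradicting the vanishing Neumann trace. This establishes $\range(K_1^*)\not\subseteq\range(K_2^*)$ and hence the lemma.

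I expect the range non-inclusion to be the main obstacle, and within it the simultaneous handling of the merely $L^\infty$ weight $q_1-q_2$ and the unique continuation step. Choosing the source as the indicator $s_0=\chi_{E\cap B}$ (rather than a smooth bump) is precisely what keeps $\phi_0$ square-integrable despite $q_1-q_2$ possibly degenerating, and it is the \emph{nonnegativity} of $s_0$ that lets the maximum principle and Hopf's lemma yield the contradiction without any regularity of the set $\{q_1>q_2\}$. Two points still require care: the unique continuation principle for $-\Delta+q_1$ with $q_1\in L^\infty$ and $n\ge3$ (applied via a zero-extension of $w$ across $\Gamma$ followed by interior weak unique continuation), and the connectivity bookkeeping ensuring that $\overline B$ is reachable from $\Gamma$ within $\Omega\cap V$; the latter can be arranged by selecting $x_0$ in a component of $\Omega\cap V$ whose closure meets $\Gamma$ and, if necessary, chaining finitely many balls.
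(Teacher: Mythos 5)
Your proposal shares its overall architecture with the paper's proof: source-to-boundary (``virtual measurement'') operators, identification of their adjoints via dual Neumann source problems, and the Douglas-type range characterization reducing the lemma to a range non-inclusion --- this is exactly the paper's Definition of $L_B$, $L_{\Omega\setminus V}$, Lemma on their adjoints, and its Theorem on range inclusions. Where you genuinely diverge is the proof of the non-inclusion itself. The paper proves $\mathcal{R}(L_B)\cap\mathcal{R}(L_{\Omega\setminus V})=\{0\}$ (zero-extension, unique continuation, then a gluing/uniqueness argument) and combines this with density of $\mathcal{R}(L_B)$, which requires injectivity of $L_B^\ast$ via unique continuation from sets of \emph{positive measure}. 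You instead exhibit a single witness: the nonnegative source $s_0=\chi_{E\cap B}$, whose matched difference $w$ would have vanishing Cauchy data on $\partial B$ while solving $-\Delta w+q_1w=s_0\gneq 0$ in $B$, contradicting the strong maximum principle/Hopf lemma. That endgame is sound (interior $W^{2,p}$ regularity across $\partial B$ suffices, and one can even skip Hopf: the strong minimum principle on a ball $B_\rho\supset\overline{B}$ forces $w\equiv 0$, hence $s_0\equiv 0$), and it buys something real: it avoids the positive-measure unique continuation theorem entirely, needing only weak unique continuation plus the maximum principle.

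There is, however, a genuine gap at the unique continuation step, and it sits exactly where the hypothesis that $V$ is \emph{connected} must enter --- a hypothesis you never invoke. You need $w\equiv0$ on an annulus around $\partial B$, i.e.\ you need the component $U$ of $(\Omega\cap V)\setminus\overline{B}$ containing that annulus to have a piece of $\Gamma$ on its boundary from which the Cauchy-data continuation can be started. Your proposed remedy --- ``selecting $x_0$ in a component of $\Omega\cap V$ whose closure meets $\Gamma$'' --- is not available: $x_0$ is forced to be a density point of $E=\{q_1-q_2\geq c\}$, and the hypothesis only guarantees that $E$ has positive measure \emph{somewhere} in $\Omega\cap V$; you cannot choose its component. (Nor can ``chaining finitely many balls'' help, since interior weak UCP cannot jump between components of $\Omega\cap V$.) The statement that rescues the argument is that, \emph{because $V$ is connected}, every component $W$ of $\Omega\cap V$ satisfies $\emptyset\neq\partial W\cap V\subseteq\partial\Omega\cap V=\Gamma$: if $\partial W\cap V$ were empty, $W$ and $V\cap(\overline{W})^c$ would disconnect $V$, and any point of $\partial W\cap V$ lying in the open set $\Omega\cap V$ would belong to $W$ itself. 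This fact is neither stated nor proved in your write-up; without it (or without connectedness of $V$) the proof, and indeed the lemma, breaks down. The cleanest repair is precisely the paper's device, which bypasses all component bookkeeping: extend $w$ by zero and $q_1$ by $1$ across $\Gamma$ to all of $\Omega\cup V$ (using that $w$ has vanishing Dirichlet trace on $\Gamma$ and vanishing Neumann trace on $\partial\Omega$), observe that the extension solves the Schr\"odinger equation on the \emph{connected} open set $V\setminus\overline{B}$ and vanishes on the nonempty open set $V\setminus\overline{\Omega}$ (nonempty by the Lipschitz property), and apply unique continuation there to conclude $w\equiv 0$ on the annulus. With that step inserted, your maximum-principle argument completes a valid, and in part more elementary, proof.
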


Lemma \ref{lemma:localized_potentials} is proven in Section \ref{sec:localized_potentials}.
%
%
%

\begin{proof}[Proof of Theorem \ref{th:main_theorem_of_distinguishability}]
First, we apply Lemma \ref{lemma:localized_potentials}: There exists a $g\in L^2(\Gamma)$
such that the corresponding solution
$u:=u^{(g)}_{q_1}$ of \eqref{eq:N_BVP_of_schroedinger_equation}
fulfills
\begin{equation*}
   \myint{V\cap\Omega}{(q_1-q_2)u^2}{x}>1\quad\text{and}\quad\myint{\Omega\setminus V}{(q_1-q_2)u^2}{x}>-1.
\end{equation*}
Now, we apply Lemma \ref{lemma:monotonicity} and obtain
\begin{align*}
 \left(g,\left(\Lambda_\Gamma(q_2)-\Lambda_\Gamma(q_1)\right)g\right)_{L^2(\Gamma)}&\geq-\myint{\Omega}{(q_2-q_1){u}^2}{x}\\
								      &=\myint{V\cap\Omega}{(q_1-q_2){u}^2}{x}+\myint{\Omega\setminus V}{(q_1-q_2){u}^2}{x}\\
								      &> 1-1=0.
\end{align*}
This shows that $\Lambda_\Gamma(q_2)-\Lambda_\Gamma(q_1)$ is not semi negative definite and thus has a positive eigenvalue.
\end{proof}

\section{Monotonicity for Neumann-to-Dirichlet maps}\label{sec:monotonicity_for_the_NtDs}

Again, let $q_1,q_2\in L^\infty_+(\Omega)$, $V\subseteq\mathbb{R}^n$ be an open connected set and $\Gamma:=\partial\Omega\cap V\neq\emptyset$.
%

Such monotonicity estimates are well-known for the inverse conductivity problem, cf., e.g., Ikehata, Kang, Seo, and Sheen \cite{Kan97,ikehata1998size}.

Lemma \ref{lemma:monotonicity} follows from \cite[Lemma 4.1]{harrach2009uniqueness}. Since the proof is simple and short, we include it for the sake of completeness.

\begin{proof}[Proof of Lemma \ref{lemma:monotonicity}]
Let $g\in L^2(\Gamma)$ and $u_i:=u^{(g)}_{q_i}\in H^1(\Omega)$ be the corresponding solutions of \eqref{eq:N_BVP_of_schroedinger_equation} for $i\in\lbrace 1,2\rbrace$. Then,
\begin{equation*}
 b_i(u_i,w):=\myint{\Omega}{\nabla u_i \nabla w + q_i u w}{x}=\myint{\Gamma}{g w\vert_\Gamma}{s}=:l(w)\quad\forall w\in H^1(\Omega),\ i\in\lbrace 1,2 \rbrace.
\end{equation*}

Now, we use this and consider
\begin{align*}
 &\left(g,\left(\Lambda_\Gamma(q_2)-\Lambda_\Gamma(q_1)\right)g\right)_{L^2(\Gamma)}\\
 &\quad\quad=l(u_2)-l(u_1)=b_2(u_2,u_2)-2 b_2(u_2,u_1)+b_1(u_1,u_1)\\
  \label{eq:deriving_monotonicity}
  &\quad\quad=-\myint{\Omega}{(q_2-q_1)u_1^2-\left(\nabla(u_2-u_1)\right)^2-q_2(u_1-u_2)^2}{x}.
\end{align*}
Since $q_2\geq0$, the assertion follows.
\end{proof}

\section{Localized Potentials}
\label{sec:localized_potentials}

Again, let $q_1,q_2\in L^\infty_+(\Omega)$, $V\subseteq\mathbb{R}^n$ be an open connected set and $\Gamma:=\partial\Omega\cap V\neq\emptyset$.
In addition, as assumed in Lemma \ref{lemma:localized_potentials}, let $q_1\gneq q_2$ on $\Omega\cap V$ (i.e., $q_1\vert_{\Omega\cap V}\geq q_2\vert_{\Omega\cap V}$ and $q_1\vert_{\Omega\cap V}\not\equiv q_2\vert_{\Omega\cap V}$).

%

Since the open set $V\cap\Omega$ is a countable union of closed balls and $q_1\gneq q_2$ on $\Omega\cap V$, there exists a closed ball
\begin{equation}\label{eq:the_ball_B}
 B\subseteq V\cap\Omega\quad \text{where}\quad  q_1\gneq q_2
\end{equation}
and $V\setminus B$ is connected.

To prove Lemma \ref{lemma:localized_potentials}, we introduce two operators in Definition \ref{def:virtual_measurement_operators} and present some properties of these operators and their adjoints in Lemma \ref{lemma:well_definedness_and_some_properties_of_the_vm_operators}.
In the proof of Lemma \ref{lemma:well_definedness_and_some_properties_of_the_vm_operators}, the following two theorems play a key role.

\begin{theorem}\label{theorem:14th_importent_propertie} Let $H_1,H_2$ be two Hilbert spaces, $L\in\mathcal{L}(H_1,H_2)$ and $h\in H_2$. Then,
\begin{equation}
 h\in\mathcal{R}(L)\quad\Leftrightarrow\quad\exists C>0\,:\,|(h,g)_{H_2}|\leq C\|L^\ast g\|_{H_1}\quad\forall g\in H_2.
\end{equation}
\end{theorem}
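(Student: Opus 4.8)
The plan is to prove the two implications separately, with the forward direction being essentially immediate and the reverse direction carrying the real content. For the forward direction I would suppose $h\in\mathcal{R}(L)$, so that $h=L\phi$ for some $\phi\in H_1$. Then for every $g\in H_2$ the defining property of the adjoint gives $(h,g)_{H_2}=(L\phi,g)_{H_2}=(\phi,L^\ast g)_{H_1}$, and Cauchy--Schwarz yields $|(h,g)_{H_2}|\leq\|\phi\|_{H_1}\,\|L^\ast g\|_{H_1}$. Thus the estimate holds with $C:=\|\phi\|_{H_1}$, and this half requires nothing beyond the adjoint identity.

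For the reverse direction I would build the preimage $\phi$ by hand. The idea is to read the estimate as the statement that the assignment $L^\ast g\mapsto(h,g)_{H_2}$ is a bounded linear functional on the range of $L^\ast$. Concretely, I would define $F:\mathcal{R}(L^\ast)\to\R$ by $F(L^\ast g):=(h,g)_{H_2}$. The first thing to verify --- and this is exactly where the hypothesis enters --- is that $F$ is well defined: if $L^\ast g_1=L^\ast g_2$, then applying the estimate to $g_1-g_2$ forces $|(h,g_1-g_2)_{H_2}|\leq C\|L^\ast(g_1-g_2)\|_{H_1}=0$, so the value of $F$ is independent of the chosen representative. The same estimate shows $|F(L^\ast g)|\leq C\|L^\ast g\|_{H_1}$, i.e.\ $F$ is bounded with norm at most $C$.

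Having established that $F$ is a bounded functional on $\mathcal{R}(L^\ast)$, I would extend it continuously to the closure $\overline{\mathcal{R}(L^\ast)}$, which is a closed subspace and hence itself a Hilbert space, and then invoke the Riesz representation theorem to obtain $\phi\in\overline{\mathcal{R}(L^\ast)}\subseteq H_1$ with $F(y)=(\phi,y)_{H_1}$ for all $y\in\overline{\mathcal{R}(L^\ast)}$. The final step is to check that this $\phi$ is the sought preimage: for every $g\in H_2$,
\[
 (L\phi,g)_{H_2}=(\phi,L^\ast g)_{H_1}=F(L^\ast g)=(h,g)_{H_2},
\]
and since $g$ is arbitrary this forces $L\phi=h$, so $h\in\mathcal{R}(L)$.

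I expect the only genuinely delicate point to be the well-definedness of $F$; everything else is a routine application of boundedness, continuous extension, and Riesz representation. It is precisely the quantitative estimate $|(h,g)_{H_2}|\leq C\|L^\ast g\|_{H_1}$ that both guarantees $F$ does not see the kernel of $L^\ast$ and makes it bounded, which is why the estimate is equivalent to range membership rather than merely necessary for it.
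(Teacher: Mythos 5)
Your proof is correct, and both implications are handled properly: the forward direction via the adjoint identity plus Cauchy--Schwarz, and the reverse direction via the functional $F(L^\ast g):=(h,g)_{H_2}$, whose well-definedness is indeed exactly where the estimate is needed, followed by continuous extension to $\overline{\mathcal{R}(L^\ast)}$ and Riesz representation. There is nothing to compare against in the paper itself: the paper does not prove this theorem but simply cites the literature (Bourbaki, and \cite[Lemma 3.4]{fruhauf2007detecting} for the Banach space version). Your argument is the standard one underlying those references; in the Banach space setting one replaces your Riesz step by a Hahn--Banach extension, so in the Hilbert case your route is the natural specialization and is, if anything, more elementary and self-contained than what the paper offers. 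One small caveat: over complex scalars the map $g\mapsto(h,g)_{H_2}$ is conjugate-linear while $g\mapsto L^\ast g$ is linear, so your $F$ is conjugate-linear rather than linear; this is harmless (work with $F(L^\ast g):=(g,h)_{H_2}$ instead, or note that the paper's application is to real $L^2$ spaces, where the issue disappears), but it is worth stating which scalar field you are using.
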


\begin{proof}
 This is a well-known result from functional analysis (see, e.g., the book of Bourbarki \cite{bourbaki2003elements}). For Banach spaces, a proof is given in \cite[Lemma 3.4]{fruhauf2007detecting}.
\end{proof}


\begin{theorem}[Unique continuation from sets of positive measure]\label{th:UCP_from_a_set_of_positive_measure}
 Let $\Omega'\in\mathbb{R}^m$, $m\geq 3$, be a connected open set and $q\in L^\infty(\Omega')$. The trivial solution of
 \begin{equation}\label{eq:Schroedinger_equation_UCPpos}
  -\Delta u +qu=0
 \end{equation}
 is the only $H^1(\Omega')$-solution vanishing on a measurable set of positive measure.
\end{theorem}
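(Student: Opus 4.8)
The plan is to deduce this weak-to-strong unique continuation statement from the classical \emph{strong} unique continuation property for the Schr\"odinger operator $-\Delta+q$ with $q\in L^\infty$, bridging the gap between ``vanishing on a set of positive measure'' and ``vanishing to infinite order at a point'' by a Lebesgue density argument.

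First I would record that $u$ is continuous. Since $q\in L^\infty(\Omega')$, the equation reads $\Delta u=qu$, so that $\Delta u\in L^2_{loc}$ and hence $u\in H^2_{loc}(\Omega')$; a standard elliptic bootstrap (Calder\'on--Zygmund estimates combined with the Sobolev embeddings) then gives $u\in W^{2,p}_{loc}(\Omega')$ for every $p<\infty$, and in particular $u\in C^{1,\alpha}_{loc}(\Omega')$. Thus the zero set $E:=\{x\in\Omega':u(x)=0\}$ is well defined pointwise, and by the standard property of Sobolev functions on their level sets one has $\nabla u=0$ almost everywhere on $E$.

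The analytic core is a \emph{doubling inequality}. Carleman estimates for $-\Delta+q$ with bounded potential (in the spirit of Jerison and Kenig, and of Garofalo and Lin) yield, for every nontrivial $H^1$-solution $u$ and every $x_0$ in a fixed compact subset of $\Omega'$, constants $C>0$ and $r_0>0$ such that $\int_{B_{2r}(x_0)}u^2\,\mathrm dx\le C\int_{B_r(x_0)}u^2\,\mathrm dx$ for all $0<r<r_0$. By the theory of Garofalo and Lin, such a doubling property forces the measure $u^2\,\mathrm dx$ to be a local Muckenhoupt $A_\infty$-weight; since $\int_E u^2\,\mathrm dx=0$, the $A_\infty$-condition supplies a constant $\delta>0$, uniform for small $r$, with $|E\cap B_r(x_0)|\le(1-\delta)\,|B_r(x_0)|$. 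In other words, if $u\not\equiv0$ then no point of $\Omega'$ is a point of Lebesgue density $1$ of $E$.

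I would then conclude by contradiction. If $u$ vanishes on a set $E$ of positive measure but $u\not\equiv0$, the Lebesgue density theorem produces a density-$1$ point of $E$ lying in $E\subseteq\Omega'$, contradicting the density bound above; hence $u\equiv0$ on the connected set $\Omega'$. The step I expect to be the genuine obstacle is the doubling inequality: this is precisely where the equation, and not merely the smoothness of $u$, must enter, and it rests on the nontrivial Carleman estimates valid for merely bounded potentials in dimension $m\ge 3$. By contrast, the regularity bootstrap, the vanishing of $\nabla u$ on $E$, and the final density argument are routine once this quantitative unique continuation is available.
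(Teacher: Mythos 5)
Your proposal is correct in substance, but it takes a genuinely different route from the paper. The paper (following Regbaoui) factors the statement through the notion of a \emph{zero of infinite order}: it cites the result of de Figueiredo--Gossez (or Hadi--Tsouli) that an $H^1$-solution vanishing on a set of positive measure has a zero of infinite order, and then invokes H\"ormander's strong unique continuation theorem to conclude that such a solution is trivial. You instead follow the Garofalo--Lin route: frequency-function/Carleman machinery gives a doubling inequality, the doubling property of $u^2\,\mathrm{d}x$ yields that it is locally a Muckenhoupt $A_\infty$ weight, and the $A_\infty$ condition is incompatible with vanishing on a set of positive measure by the Lebesgue density theorem. Both arguments delegate the same hard analytic content (quantitative unique continuation for bounded potentials in dimension $m\ge 3$) to the literature and differ only in the measure-theoretic bridge; yours yields quantitative by-products (doubling and $A_\infty$ constants on compact subsets, hence that the zero set of a nontrivial solution has measure zero) and extends to Lipschitz principal parts, while the paper's version is shorter because its two citations compose immediately. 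Three points worth tightening in your write-up: (i) doubling of $u^2\,\mathrm{d}x$ alone does not force $A_\infty$ for a general weight --- one also needs the interior elliptic estimate $\sup_{B_r}u^2\le C\,|B_{2r}|^{-1}\int_{B_{2r}}u^2\,\mathrm{d}x$, which together with doubling gives a reverse H\"older inequality and hence $A_\infty$; this is exactly what Garofalo--Lin prove, so the citation carries it, but the implication as you phrase it is not formal. (ii) To have doubling at the density point $x_0$ you need $u\not\equiv 0$ \emph{near} $x_0$, not merely globally; this is where connectedness of $\Omega'$ and weak unique continuation (itself contained in the chain-of-balls form of the doubling theory) silently enter, and it deserves explicit mention since connectedness is otherwise unused in your argument. (iii) The observation that $\nabla u=0$ a.e.\ on the zero set is never used afterwards and can be dropped.
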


\begin{proof}
%
 Theorem \ref{th:UCP_from_a_set_of_positive_measure} is the combination of the following two results (cf. the work of Rachid Regbaoui \cite[proof of Theorem 2.1]{regbaoui2001unique}). 
 \begin{enumerate}
  \item[(a)] $H^1(\Omega')$-solutions of \eqref{eq:Schroedinger_equation_UCPpos} that vanish on a measurable set of positive measure have zeros of infinite order (see, e.g.,
the result of de Figueiredo and Gossez \cite[Proposition 3]{de1992strict} or the result of Hadi and Tsouli \cite[Theorem 2.1]{hadi2001strong}).
  \item[(b)] The trivial solution $u=0$ is the only $H^1(\Omega')$-solution of \eqref{eq:Schroedinger_equation_UCPpos} that has a zero of infinite order (see, e.g, the book of H\"ormander
 \cite[Theorem 17.2.6]{hormander1983analysis}).
 \end{enumerate}
\end{proof}

\begin{definition}[Virtual measurement operators]\label{def:virtual_measurement_operators}
Let $B\subseteq V\cap\Omega$ be a non-empty closed ball with $q_1\gneq q_2$ on $B$ as in \eqref{eq:the_ball_B}.
 The operators $L_B$ and $L_{\Omega\setminus V}$ are defined by
 \begin{eqnarray}
  L_B:L^2(B)\to L^2(\Gamma),&\quad& f\mapsto v_B\vert_\Gamma,\\
  L_{\Omega\setminus V}:L^2(\Omega\setminus V)\to L^2(\Gamma), & \quad & h\mapsto v_{\Omega\setminus V}\vert_\Gamma,
 \end{eqnarray}
 where $v_B,v_{\Omega\setminus V}\in H^1(\Omega)$ are the unique solutions of
  \begin{eqnarray}
  -\Delta v_B+q_1v_B=|q_1-q_2|^{1/2}f\chi_B\ \text{in}\ \Omega &\text{with}& \partial_\nu v_B\vert_{\partial\Omega}=0,\\
  -\Delta v_{\Omega\setminus V}+q_1v_{\Omega\setminus V}=|q_1-q_2|^{1/2}h\chi_{\Omega\setminus V}\ \text{in}\ \Omega &\text{with}& \partial_\nu v_{\Omega\setminus V}\vert_{\partial\Omega}=0,
 \end{eqnarray}
or equivalently
 \begin{align}
  \myint{\Omega}{\nabla v_B\cdot\nabla w+q_1v_Bw}{x}&=\myint{B}{|q_1-q_2|^{1/2}wf}{x}\quad\forall w\in H^1 (\Omega),\\
  \myint{\Omega}{\nabla v_{\Omega\setminus V}\cdot\nabla w+q_1v_{\Omega\setminus V}w}{x}&=\myint{\Omega\setminus V}{|q_1-q_2|^{1/2}wh}{x}\quad\forall w\in H^1 (\Omega).
 \end{align}
\end{definition}

\begin{lemma}\label{lemma:well_definedness_and_some_properties_of_the_vm_operators}
\begin{enumerate}
 \item[(a)] The adjoint operators
 \begin{equation}
  L_B^\ast:L^2(\Gamma)\to L^2(B)\quad\text{and}\quad L_{\Omega\setminus V}^\ast:L^2(\Gamma)\to L^2(\Omega\setminus V)
 \end{equation}
fulfill
\begin{equation}
 L_B^\ast g=\left.\left(|q_1-q_2|^{1/2}u\right)\right\vert_B\quad\text{and}\quad L_{\Omega\setminus V}^\ast g=\left.\left(|q_1-q_2|^{1/2}u\right)\right\vert_{\Omega\setminus V},
\end{equation}
where $u:=u^{(g)}_{q_1}\in H^1(\Omega)$ is the corresponding solution of \eqref{eq:N_BVP_of_schroedinger_equation}.
\item[(b)] The adjoint operator $L_B^\ast$ is injective and $\overline{\mathcal{R}(L_B)}=L^2(\Gamma)$.
\item[(c)] $\mathcal{R}(L_B)\cap\mathcal{R}(L_{\Omega\setminus V})=\lbrace0\rbrace$.
\item[(d)] $\mathcal{R}(L_B)\not\subseteq\mathcal{R}(L_{\Omega\setminus V})$.
\item[(e)] $\not\exists C>0\,:\,\|L_B^\ast g\|\leq C\|L_{\Omega\setminus V}^\ast g\|\quad\forall g\in L^2(\Gamma)$.
\end{enumerate}
\end{lemma}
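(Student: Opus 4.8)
The plan is to establish the five claims in the order (a)$\Rightarrow$(b)$\Rightarrow$(c)$\Rightarrow$(d)$\Rightarrow$(e), each building on its predecessors, with the range-disjointness (c) carrying essentially all the analytic weight. For (a) I would simply compute the adjoint from the two variational identities: testing the Neumann identity for $u:=u^{(g)}_{q_1}$ against $w=v_B$, and the defining identity for $v_B$ against $w=u$, the common symmetric form $\int_\Omega \nabla v_B\cdot\nabla u+q_1v_Bu\dx$ drops out and leaves $(L_Bf,g)_{L^2(\Gamma)}=\int_B|q_1-q_2|^{1/2}uf\dx$, i.e.\ $L_B^\ast g=(|q_1-q_2|^{1/2}u)|_B$; the same computation gives $L_{\Omega\setminus V}^\ast$. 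For (b) I note that $\overline{\mathcal{R}(L_B)}=\ker(L_B^\ast)^\perp$, so density of the range is equivalent to injectivity of $L_B^\ast$. If $L_B^\ast g=0$, then $|q_1-q_2|^{1/2}u=0$ a.e.\ on $B$; since $q_1\gneq q_2$ on $B$ forces $q_1-q_2>0$ on a subset of positive measure, $u$ vanishes on a set of positive measure, so Theorem \ref{th:UCP_from_a_set_of_positive_measure} on the connected set $\Omega$ gives $u\equiv0$ and hence $g=0$.

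The heart of the proof is (c). Given $\phi=L_Bf=L_{\Omega\setminus V}h$, I would set $w:=v_B-v_{\Omega\setminus V}$. Then $w$ has vanishing Cauchy data on $\Gamma$ (zero trace, since both traces equal $\phi$, and $\partial_\nu w=0$ on all of $\partial\Omega$), and $w$ solves $-\Delta w+q_1w=0$ on the open set $(\Omega\cap V)\setminus B$, because the two sources live in $B$ and in $\Omega\setminus V$, respectively. The key move is to extend $w$ by zero across $\Gamma$ onto the exterior piece $V\setminus\overline{\Omega}$: using the vanishing Cauchy data, the glued function $\tilde w$ lies in $H^1(V\setminus B)$ and solves a Schr\"odinger equation $-\Delta\tilde w+\tilde q\tilde w=0$ on the \emph{connected} set $V\setminus B$ (with $\tilde q$ any bounded extension of $q_1$). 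Since $\tilde w$ vanishes on the nonempty open set $V\setminus\overline{\Omega}$ (nonempty because $\Gamma\neq\emptyset$ and $\Omega$ is Lipschitz), Theorem \ref{th:UCP_from_a_set_of_positive_measure} forces $\tilde w\equiv0$, hence $w=0$ on $(\Omega\cap V)\setminus B$. I expect this gluing-and-continuation step to be the main obstacle, since it requires justifying that zero Cauchy data across the merely Lipschitz interface $\Gamma$ produces a genuine weak solution on $V\setminus B$.

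To conclude (c), I would reassemble a global solution: define $z\in H^1(\Omega)$ by $z=v_{\Omega\setminus V}$ on $\Omega\cap V$ and $z=v_B$ on $\Omega\setminus B$. These agree on the overlap $(\Omega\cap V)\setminus B$ by the previous step, each is a homogeneous solution on its piece, and their Cauchy data match across $\partial B$ (because $v_{\Omega\setminus V}$ is a single homogeneous solution near $B$ and coincides with $v_B$ just outside $B$). Thus $z$ solves $-\Delta z+q_1z=0$ in $\Omega$ with $\partial_\nu z|_{\partial\Omega}=0$; testing against $z$ and using $q_1\geq c>0$ gives $\int_\Omega|\nabla z|^2+q_1z^2\dx=0$, so $z\equiv0$ and $\phi=z|_\Gamma=0$.

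The last two parts are short deductions. For (d), were $\mathcal{R}(L_B)\subseteq\mathcal{R}(L_{\Omega\setminus V})$, then (c) would give $\mathcal{R}(L_B)=\{0\}$, contradicting the density $\overline{\mathcal{R}(L_B)}=L^2(\Gamma)\neq\{0\}$ from (b). For (e), an estimate $\|L_B^\ast g\|\leq C\|L_{\Omega\setminus V}^\ast g\|$ would yield, via $|(L_B\xi,g)|=|(\xi,L_B^\ast g)|\leq C\|\xi\|\,\|L_{\Omega\setminus V}^\ast g\|$ and Theorem \ref{theorem:14th_importent_propertie}, that $L_B\xi\in\mathcal{R}(L_{\Omega\setminus V})$ for every $\xi$, i.e.\ $\mathcal{R}(L_B)\subseteq\mathcal{R}(L_{\Omega\setminus V})$, contradicting (d).
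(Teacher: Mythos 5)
Your proposal is correct and takes essentially the same route as the paper: the same variational computation of the adjoints in (a), the same positive-measure unique continuation argument for (b), the same strategy for (c) (zero Cauchy data of $v_B-v_{\Omega\setminus V}$ on $\Gamma$, extension by zero into $V\setminus\overline{\Omega}$, unique continuation on the connected set $V\setminus B$, then gluing a global homogeneous Neumann solution and invoking uniqueness), and the same use of Theorem \ref{theorem:14th_importent_propertie} for (d) and (e). The gluing step you flag as the main obstacle is resolved in the paper exactly as you anticipate, by checking the distributional derivatives of the zero-extension with an integration by parts that uses the vanishing trace, and by obtaining the equation across $\Gamma$ from the variational (zero-Neumann) formulation.
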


\begin{proof}
\begin{enumerate}
 \item[(a)] For $f\in L^2(B)$, let $v_B^{(f)}\in H^1(\Omega)$ be the solution of
 \begin{equation*}
  \myint{\Omega}{\nabla v_B^{(f)}\cdot\nabla w+q_1v_B^{(f)}w}{x}=\myint{B}{|q_1-q_2|^{1/2}wf}{x}\quad\forall w\in H^1(\Omega).
 \end{equation*}
Then, in particular,
$$L_Bf=v_B^{(f)}\vert_{\Gamma}.$$
 Furthermore, for $g\in L^2(\Gamma)$, let $u^{(g)}_{q_1}$ be the corresponding solution of \eqref{eq:N_BVP_of_schroedinger_equation}. Then, $u^{(g)}_{q_1}$ also solves the equivalent variational formulation
 \begin{equation*}
  \myint{\Omega}{\nabla w\cdot\nabla u^{(g)}_{q_1}+q_1wu^{(g)}_{q_1}}{x}=\myint{\Gamma}{g w\vert_\Gamma}{s}\quad\forall w\in H^1(\Omega).
 \end{equation*}
Hence, for arbitrary $f\in L^2(B)$ and $g\in L^2(\Gamma)$,
\begin{align*}
 (f,L_B^\ast g)_{L^2(B)}&=(L_Bf,g)_{L^2(\Gamma)}=\myint{\Gamma}{gv_B^{(f)}\vert_{\Gamma}}{s}\\
 &=\myint{\Omega}{\nabla v_B^{(f)}\cdot\nabla u^{(g)}_{q_1}+q_1v_B^{(f)}u^{(g)}_{q_1}}{x}\\
 &=\myint{B}{|q_1-q_2|^{1/2}u^{(g)}_{q_1}f}{x}\\
 &=\left(f,\left(|q_1-q_2|^{1/2}u^{(g)}_{q_1}\right)\vert_{B}\right)_{L^2(B)}.
\end{align*}
This yields $L_B^\ast g=\left.\left(|q_1-q_2|^{1/2}u^{(g)}_{q_1}\right)\right\vert_{B}$.

Analogously, it follows $L_{\Omega\setminus V}^\ast g=\left.\left(|q_1-q_2|^{1/2}u^{(g)}_{q_1}\right)\right\vert_{\Omega\setminus V}$.
\item[(b)] First, we prove the injectivity of $L_B^\ast$. Let $g\in L^2(\Gamma)$ with $L_B^\ast g=0$ and
$u:=u^{(g)}_{q_1}\in H^1(\Omega)$ be the corresponding solution of \eqref{eq:N_BVP_of_schroedinger_equation}.
>From (a) it follows $L_B^\ast g=\left(|q_1-q_2|^{1/2}u\right)\vert_{B}$.
%
Since $q_1-q_2\gneq 0$ on $B$, there exists a measurable set $E\subseteq B$ of positive measure where $|q_1-q_2|^{1/2}\neq0$.
 Hence, $\left(|q_1-q_2|^{1/2}u\right)\vert_E\equiv 0$ implies $u\vert_E\equiv 0$.
>From Theorem \ref{th:UCP_from_a_set_of_positive_measure} it follows that $u\equiv0$ on $\Omega$ and thus $g=\partial_\nu u\vert_\Gamma=0$.
%
This shows the injectivity of $L_B^\ast$ and thus $\overline{\mathcal{R}(L_B)}=\mathcal{N}(L_B^\ast)^\perp=L^2(\Gamma)$. 

\item[(c)] Recall that $B$ and $\Omega\setminus V$ are closed in $\Omega$ and that $V\setminus B$ is connected.
Let
\begin{equation*}
 \phi=L_B f=L_{\Omega\setminus V} h\in\mathcal{R}(L_B)\cap\mathcal{R}(L_{\Omega\setminus V})
\end{equation*}
and $v_B,v_{\Omega\setminus V}\in H^1(\Omega)$ be the corresponding solutions of Definition \ref{def:virtual_measurement_operators}.
First, we show that
\begin{equation}\label{eq:v1_equal_v2}
 v_B=v_{\Omega\setminus V}\quad\text{on}\quad\Omega\setminus\left(\overline{B}\cup\overline{\Omega\setminus V}\right)=(\Omega\cap V)\setminus B:
\end{equation} 
On $\Omega\cup V$, we define the continuations

\begin{equation*}
 q:=\begin{cases}
    q_1, & \text{on }\Omega,\\
    1,   & \text{on }V\setminus\Omega,
   \end{cases}
\end{equation*}
\begin{equation*}
 \tilde v:=\begin{cases}
    v, & \text{on }\Omega,\\
    0,   & \text{on }V\setminus\Omega,
   \end{cases}
   \quad\text{and}\quad
   \tilde v_j:=\begin{cases}
    \partial_{x_j} v, & \text{on }\Omega,\\
    0,   & \text{on }V\setminus\Omega,
   \end{cases}
\end{equation*}
where $v:=v_B-v_{\Omega\setminus V}$.

Obviously, $\tilde v,\tilde v_j\in L^2(\Omega\cup V)$. To verify that $\tilde v\in H^1(\Omega\cup V)$, it is left to show $\partial_{x_j}\tilde v=\tilde v_j$. This can be shown by using
\begin{equation*}
 v\vert_\Gamma=v_B\vert_\Gamma-v_{\Omega\setminus V}\vert_\Gamma=\phi-\phi=0.
\end{equation*}
Let $\varphi\in\mathcal{D}(\Omega\cup V),$ then,
\begin{align*}
 \myint{\Omega\cup V}{\tilde v \partial_{x_j}\varphi}{x}&=\myint{\Omega}{v \partial_{x_j}\varphi}{x}\\
 &=-\myint{\Omega}{\varphi\partial_{x_j}v}{x}+\myint{\partial\Omega}{(\varphi v)\vert_\Gamma\nu_j}{s}\\
 &=-\myint{\Omega}{v_j\varphi}{x}=-\myint{\Omega\cup V}{\tilde v_j\varphi}{x}.
\end{align*}

Now, we go on showing \eqref{eq:v1_equal_v2}. Since $v=\tilde v\vert_\Omega$ fulfills
\begin{align*}
  &\myint{\Omega}{\nabla v\cdot\nabla w+q_1vw}{x}\\
  &\quad\quad=\myint{\Omega}{|q_1-q_2|^{1/2}w(f\chi_B-h\chi_{\Omega\setminus V})}{x}\quad\forall w\in H^1 (\Omega),
\end{align*}
it holds
\begin{equation*}
  \myint{V\setminus B}{\nabla \tilde v\cdot\nabla \varphi+q\tilde v\varphi}{x}=0\quad\forall \varphi\in \mathcal{D}\left(V\setminus B\right).
\end{equation*}
We obtain that $\tilde v$ (as a function in $H^1\left(V\setminus B\right)$)
solves
\begin{equation*}
 -\Delta \tilde v + q \tilde v=0\quad\text{on}\quad V\setminus B
\end{equation*}
and vanishes on $V\setminus\Omega$.
Since $V\setminus\Omega$ is a non-empty open set ($V$ is open and has a non-empty intersection $\Gamma$ with the Lipschitz-domain $\Omega$) and
$V\setminus B$ is connected,
Theorem \ref{th:UCP_from_a_set_of_positive_measure} shows that $\tilde v\equiv 0$ on
$V\setminus B$ and thus
\begin{equation*}
 v_B=v_{\Omega\setminus V}\quad\text{on}\quad (V\cap\Omega)\setminus B.
\end{equation*}

To finally show $\phi=0$, we define
\begin{equation*}
 u:=\begin{cases}
     v_B&\text{on}\ \Omega\setminus B,\\
     v_{\Omega\setminus V}&\text{on}\ B.
    \end{cases}
\end{equation*}
We can partition test functions (in $\mathcal{D}(\Omega)$ and $H^1(\Omega)$), by using smooth partitions of unity, to prove that $u$ is an $H^1(\Omega)$-function and the unique solution of
\begin{align*}
 -\Delta u+q_1u&=0\quad\text{on}\quad\Omega,\\
 \partial_\nu u\vert_{\partial\Omega}&=0.
\end{align*}
Hence, $u$ has to be equal to the trivial solution and thus
\begin{equation*}
 \phi=v_B\vert_\Gamma=u\vert_\Gamma\equiv0.
\end{equation*}
\item[(d)] This simply follows from (b) and (c).
\item[(e)] Let us assume there exists a constant $C>0$ such that $$\|L_B^\ast g\|\leq C\|L_{\Omega\setminus V}^\ast g\|\quad \forall g\in L^2(\Gamma).$$ Then,
 $$\mathcal{R}(L_B)\subseteq\mathcal{R}(L_{\Omega\setminus V})$$ immediately follows from Theorem \ref{theorem:14th_importent_propertie} and this is a contradiction to (d).
\end{enumerate}
\end{proof}

\begin{proof}[Proof of Lemma \ref{lemma:localized_potentials}]
 The assertion follows from Lemma \ref{lemma:well_definedness_and_some_properties_of_the_vm_operators}:

The trivial case is that when $L_{\Omega\setminus V}^\ast$ is not injective. Then, there exists an element $g\in L^2(\Gamma)\setminus\lbrace 0\rbrace$ with $\|L_{\Omega\setminus V}^\ast g\|=0$.
By the injectivity of $L_B^\ast$ we have $\|L_B^\ast g\|=:c_g\geq 0$. In this case, we can set $g_m:=mg$ for all $m\in\mathbb{N}$.

When $L_{\Omega\setminus V}^\ast$ is injective, we derive a suitable sequence $(g_m)_{m\in\mathbb{N}}\subseteq L^2(\Gamma)$ as follows.
 
 Let $C_m=m^2$ for $m\in\mathbb{N}$. Lemma \ref{lemma:well_definedness_and_some_properties_of_the_vm_operators} (e) implies the existence of a sequence $(g_m')_{m\in\mathbb{N}}\subseteq L^2(\Gamma)$ with
 \begin{equation}\label{inequality:converse_inequality_for_adjoint_vm_operators_for_C_m}
  \|L_B^\ast g_m'\|>C_m\|L_{\Omega\setminus V}^\ast g_m'\|\quad\forall m\in\mathbb{N}.
 \end{equation}

 In particular, this implies $g_m'\neq 0$ for all $m\in\mathbb{N}$. Since
 $L_{\Omega\setminus V}^\ast$ is injective, we can set $g_m:=\frac{g_m'}{m\|L_{\Omega\setminus V}^\ast g_m'\|}$. By multiplying \eqref{inequality:converse_inequality_for_adjoint_vm_operators_for_C_m} with $\frac{1}{m\|L_{\Omega\setminus V}^\ast g_m'\|}$, we obtain
 \begin{equation*}
  \|L_B^\ast g_m\|>m\quad\forall m\in\mathbb{N}.
 \end{equation*}
Furthermore, it holds
\begin{equation*}
 \|L_{\Omega\setminus V}^\ast g_m\|=\frac{1}{m}\quad\forall m\in\mathbb{N}.
\end{equation*}

For both cases, we obtain a sequence $(g_m)_{m\in\mathbb{N}}$ such that
\begin{align*}
 \lim_{m\to\infty}\myint{V\cap\Omega}{(q_1-q_2){u_m}^2}{x}=\lim_{m\to\infty}\|L_B^\ast g_m\|^2&=\infty,\\
 \lim_{m\to\infty}\myint{\Omega\setminus V}{(q_1-q_2){u_m}^2}{x}=\lim_{m\to\infty}\|L_{\Omega\setminus V}^\ast g_m\|^2&=0
\end{align*}
where $u_m:=u^{(g_m)}_{q_1}\in H^1(\Omega)$ is the corresponding solution of \eqref{eq:N_BVP_of_schroedinger_equation}.
\end{proof}

\bibliographystyle{amsplain}
\bibliography{literaturliste}

\providecommand{\bysame}{\leavevmode\hbox to3em{\hrulefill}\thinspace}
\providecommand{\MR}{\relax\ifhmode\unskip\space\fi MR }
\providecommand{\MRhref}[2]{%
  \href{http://www.ams.org/mathscinet-getitem?mr=#1}{#2}
}
\providecommand{\href}[2]{#2}
\begin{thebibliography}{10}

\bibitem{astala2006calderon}
Kari Astala and Lassi P{\"a}iv{\"a}rinta, \emph{Calder{\'o}n's inverse
  conductivity problem in the plane}, Ann. of Math. (2006), 265--299.

\bibitem{bourbaki2003elements}
Nicolas Bourbaki, \emph{Elements of mathematics: Topological vector spaces,
  chapters 1-5}, Springer-Verlag, 2003.

\bibitem{bukhgeim2008recovering}
AL~Bukhgeim and AA~Bukhgeim, \emph{Recovering a potential from cauchy data in
  the two-dimensional case}, J. Inverse Ill-Posed Probl \textbf{16} (2008),
  no.~1, 19--33.

\bibitem{calderon1980inverse}
Alberto~P Calder\'on, \emph{On an inverse boundary value problem}, Seminar on
  Numerical Analysis and its Application to Continuum Physics (W~H Meyer and
  M~A Raupp, eds.), Brasil. Math. Soc., Rio de Janeiro, 1980, pp.~65--73.

\bibitem{calderon2006inverse}
\bysame, \emph{On an inverse boundary value problem}, Comput. Appl. Math.
  \textbf{25} (2006), no.~2--3, 133--138.

\bibitem{de1992strict}
Djairo~G de~Figueiredo and Jean-Pierre Gossez, \emph{Strict monotonicity of
  eigenvalues and unique continuation}, Comm. Partial Differential Equations
  \textbf{17} (1992), no.~1-2, 339--346.

\bibitem{fruhauf2007detecting}
Florian Fr{\"u}hauf, Bastian Gebauer, and Otmar Scherzer, \emph{Detecting
  interfaces in a parabolic-elliptic problem from surface measurements}, SIAM
  J. Numer. Anal. \textbf{45} (2007), no.~2, 810--836.

\bibitem{gebauer2008localized}
Bastian Gebauer, \emph{Localized potentials in electrical impedance
  tomography}, Inverse Probl. Imaging \textbf{2} (2008), no.~2, 251--269.

\bibitem{haberman2013uniqueness}
Boaz Haberman, Daniel Tataru, et~al., \emph{Uniqueness in calder{\'o}n’s
  problem with lipschitz conductivities}, Duke Mathematical Journal
  \textbf{162} (2013), no.~3, 497--516.

\bibitem{hadi2001strong}
Islam~Eddine Hadi and N~Tsouli, \emph{Strong unique continuation of
  eigenfunctions for p-laplacian operator}, Int. J. Math. Math. Sci.
  \textbf{25} (2001), no.~3, 213--216.

\bibitem{harrach2009uniqueness}
Bastian Harrach, \emph{On uniqueness in diffuse optical tomography}, Inverse
  Problems \textbf{25} (2009), no.~5, 055010 (14pp).

\bibitem{harrach2012simultaneous}
\bysame, \emph{Simultaneous determination of the diffusion and absorption
  coefficient from boundary data}, Inverse Probl. Imaging \textbf{6} (2012),
  no.~4, 663--679.

\bibitem{harrach2013monotonicity}
Bastian Harrach and Marcel Ullrich, \emph{Monotonicity-based shape
  reconstruction in electrical impedance tomography}, SIAM J. Math. Anal.
  \textbf{45} (2013), no.~6, 3382--3403.

\bibitem{hormander1983analysis}
Lars H{\"o}rmander, \emph{The analysis of linear partial differential operators
  {III}}, vol. 274, Springer, 1994.

\bibitem{ikehata1998size}
M.~Ikehata, \emph{Size estimation of inclusion}, J. Inverse Ill-Posed Probl.
  \textbf{6} (1998), no.~2, 127--140.

\bibitem{imanuvilov2010calderon}
Oleg Imanuvilov, Gunther Uhlmann, and Masahiro Yamamoto, \emph{The calder{\'o}n
  problem with partial data in two dimensions}, Journal of the American
  Mathematical Society \textbf{23} (2010), no.~3, 655--691.

\bibitem{isakov2007uniqueness}
Victor Isakov, \emph{On uniqueness in the inverse conductivity problem with
  local data}, Inverse Problems and Imaging \textbf{1} (2007), no.~1, 95.

\bibitem{Kan97}
Hyeonbae Kang, Jin~Keun Seo, and Dongwoo Sheen, \emph{The inverse conductivity
  problem with one measurement: stability and estimation of size}, SIAM J.
  Math. Anal. \textbf{28} (1997), no.~6, 1389--1405.

\bibitem{kenig2012calderon}
Carlos~E Kenig and Mikko Salo, \emph{The calder{\'o}n problem with partial data
  on manifolds and applications}, arXiv preprint arXiv:1211.1054 (2012).

\bibitem{kenig2007calderon}
Carlos~E Kenig, Johannes Sj{\"o}strand, and Gunther Uhlmann, \emph{The
  calder{\'o}n problem with partial data}, Annals of mathematics (2007),
  567--591.

\bibitem{kenig2014recent}
CE~Kenig and Mikko Salo, \emph{Recent progress in the calder{\'o}n problem with
  partial data}, Contemp. Math \textbf{615} (2014), 193--222.

\bibitem{kohn1984determining}
Robert Kohn and Michael Vogelius, \emph{Determining conductivity by boundary
  measurements}, Comm. Pure Appl. Math. \textbf{37} (1984), no.~3, 289--298.

\bibitem{kohn1985determining}
Robert~V Kohn and Michael Vogelius, \emph{Determining conductivity by boundary
  measurements {II}. interior results}, Comm. Pure Appl. Math. \textbf{38}
  (1985), no.~5, 643--667.

\bibitem{nachman2010reconstruction}
Adrian Nachman and Brian Street, \emph{Reconstruction in the calder{\'o}n
  problem with partial data}, Communications in Partial Differential Equations
  \textbf{35} (2010), no.~2, 375--390.

\bibitem{nachman1996global}
Adrian~I Nachman, \emph{Global uniqueness for a two-dimensional inverse
  boundary value problem}, Ann. of Math. (1996), 71--96.

\bibitem{regbaoui2001unique}
Rachid Regbaoui, \emph{Unique continuation from sets of positive measure},
  Carleman Estimates and Applications to Uniqueness and Control Theory,
  Springer, 2001, pp.~179--190.

\bibitem{sylvester1987global}
John Sylvester and Gunther Uhlmann, \emph{A global uniqueness theorem for an
  inverse boundary value problem}, Annals of mathematics (1987), 153--169.

\end{thebibliography}

\end{document}